\documentclass[10pt,reqno]{amsart}
\usepackage{amssymb,mathrsfs,graphicx}
\usepackage{ifthen}
\usepackage{colortbl}
\definecolor{black}{rgb}{0.0, 0.0, 0.0}
\definecolor{red}{rgb}{1.0, 0.5, 0.5}
%%%%%%%%%%%%%%%%% for drafts %%%%%%%%%%%%%%%%%%%%%%%%%%%%%%%%
%%%%%%%%%%%%%%%%%%%%%%%%%%%%%%%%%%%%%%%%%%%%%%%%%%%%%%
%%%%%%% To see notes in the margin %%%%%%%%%%
\provideboolean{shownotes} % define a boolean variable
\setboolean{shownotes}{true} % defined as ``true'' or ``false''
% Il comando fa la nota se "shownotes" e` vera, altrimenti niente
%
\newcommand{\margnote}[1]{
\ifthenelse{\boolean{shownotes}}%
{\marginpar{\raggedright\tiny\texttt{#1}}}%
{}%
}
\newcommand{\hole}[1]{
\ifthenelse{\boolean{shownotes}}%
{\begin{center} \fbox{ \rule {.25cm}{0cm} \rule[-.1cm]{0cm}{.4cm}
\parbox{.85\textwidth}{\begin{center} \texttt{#1}\end{center}} \rule
{.25cm}{0cm}}\end{center}} {} }
%%%%%%%%%%%%%%%%%%%%%%%%%%%%%%%%%%%%%%%%%%%%%%%%%%%%%%%%%%%%%%
%%%%%%%%%%%%%%%%%%%%%%%%%%%%%%%%%%%%%%%%%%%%%%%%%%%%%%%%%%%%%

\topmargin-0.1in \textwidth6.in \textheight8.5in \oddsidemargin0in
\evensidemargin0in

\title[Emergent behavior of Cucker-Smale flocking particles with time delays]{Emergent behavior of Cucker-Smale flocking particles with time delays}

\author[Choi]{Young-Pil Choi}
\address[Young-Pil Choi]{\newline Department of Mathematics and Institute of Applied Mathematics\newline
Inha University, 402--751, Incheon, Republic of Korea}
\email{ypchoi@inha.ac.kr}

\author[Li]{Zhuchun Li}
\address[Zhuchun Li]{\newline Department of Mathematics \newline
Harbin Institute of Technology}
\email{lizhuchun@hit.edu.cn}

\numberwithin{equation}{section}

\newtheorem{theorem}{Theorem}[section]
\newtheorem{lemma}{Lemma}[section]

\newtheorem{remark}{Remark}[section]

\newcommand{\R}{\mathbb R}

\newcommand{\mc}{\mathcal C}

\newcommand{\bq}{\begin{equation}}
\newcommand{\eq}{\end{equation}}
\newcommand{\e}{\varepsilon}
\newcommand{\lt}{\left}
\newcommand{\rt}{\right}
\newcommand{\lal}{\langle}
\newcommand{\ral}{\rangle}

\newcommand{\mt}{\mathcal{T}}
\def\charf {\mbox{{\text 1}\kern-.30em {\text l}}}
    %{I_0} % initial kinetic density
 % initial kinetic density x kinetic 2nd moment
 % initial data for f

  %\mathcal \vee}

 % upper bound of r()

 % lower bound of r()

 % v-support
 %x-support

%%%%%%%%%%%%%%%%
\begin{document}
%%%%%%%%%%%%%%%%
\allowdisplaybreaks

\date{\today}

\subjclass[]{}
\keywords{}

\thanks{\textbf{Acknowledgments.} YPC was supported by National Research Foundation of Korea(NRF) grant funded by the Korea government(MSIP) (No. 2017R1C1B2012918 and 2017R1A4A1014735). The authors warmly thank Professor Seung-Yeal Ha for helpful discussion and valuable comments.
}

\begin{abstract}We analyze Cucker-Smale flocking particles with delayed coupling, where different constant delays are considered between particles. By constructing a system of dissipative differential inequalities together with a continuity argument, we provide a sufficient condition for the flocking behavior when the maximum value of time delays is sufficiently small.
\end{abstract}

\maketitle \centerline{\date}

%\tableofcontents

%%%%%%%%%%%%%%%%%%%%%%%%%%%%%%%%%%%%%%%%%%%%%%%%%%%%%%%%%%%%%%%%%%%%%%%%%%%%%%%%%5
%
%
%                        Section: Introdunction
%
%
%%%%%%%%%%%%%%%%%%%%%%%%%%%%%%%%%%%%%%%%%%%%%%%%%%%%%%%%%%%%%%%%%%%%%%%%%%%%%%%%%
\section{Introduction}
Let $(x_i(t), v_i(t)) \in \R^d \times \R^d, i=1,\dots, N$ be position and velocity at time $t$ of the $i$-th agent. Then the delayed Cucker-Smale particle system can be described by
\begin{align}\label{main_eq}
\begin{aligned}
\frac{d x_i(t)}{dt} &= v_i(t), \quad i =1,\cdots, N, \quad t >0,\cr
\frac{d v_i(t)}{dt} &= \frac1N \sum_{j\neq i } \psi(|x_j(t - \tau_{ji}) - x_i(t)|)(v_j(t - \tau_{ji}) - v_i(t)),
\end{aligned}
\end{align}
subject to the initial data:
%\bq\label{eq_ini}
%(x_i(t), v_i(t))|_{t=0} =: (x_{i0}, v_{i0}), \quad i =1,\cdots, N,
%\eq
\bq\label{eq_ini}
(x_i(s), v_i(s))  =(x^0_{i}(s), v^0_{i}(s)), \quad i =1,\dots, N, \quad s\in [-\tau, 0].
\eq
Here, $\psi: \R_+ \to \R_+$ is a communication weight function,    $\tau_{ji} > 0$ denotes the interaction delay between $i$-th and $j$-th agents.

The main purpose of this paper is to study the effect of time delays in Cucker-Smale flocking particle system. For the proof, inspired by \cite{CH_p, HL}, we construct a system of dissipative differential inequalities by using diameters of position and velocity. Using that together with a continuity argument, we provide a sufficient condition for the flocking behavior estimate under a smallness assumption on the time delays. 

It is worth mentioning that there are a few literature on the flocking of Cucker-Smale type models with time delays. For example, a sufficient flocking condition for the Motsch-Tadmor variant of the model with processing delay is obtained in \cite{LW}, see also \cite{MT} for that model without time delays. In \cite{EHS},  sufficient flocking condition for the Cucker-Smale model with noise and delay is derived in terms
of noise intensity and delay length. In \cite{CH17} the first author and his collaborator analyzed a Cucker-Smale model with delay and
normalized communication weights where the communication weights received by any agent sum to 1. In another recent paper \cite{PT}, the authors considered the Cucker-Smale model with processing time-varying delays only in velocities, in  which the velocity is  governed by \[\frac{d v_i(t)}{dt}  = \frac1N \sum_{j\neq i } \psi(|x_j(t - \tau(t)) - x_i(t-\tau(t))|)(v_j(t - \tau(t)) - v_i(t)).\]
In comparison, in this paper we will consider the Cucker-Smale model \eqref{main_eq} with processing delays in velocities and positions. We also emphasize that the strategy used in \cite{PT} requires the strictly positive lower bound assumption for the weight function $\psi$, which is not needed in our framework. We refer to \cite{CHL17, CCP17} and references therein for recent surveys on Cucker-Smale type flocking models.

We now introduce the main assumptions and state the main result.

{\bf Assumption on $\psi$.-} The communication weight $\psi$ is bounded, positive, non increasing and Lipschitz continuous on $\R_+$, with $\psi(0) = 1$.

{\bf Assumption on $\tau_{ji}$.-} The interaction delays are strictly positive and symmetric, i.e., $\tau_{ji} = \tau_{ij} > 0$ for all $i,j \in \{1,\cdots, N\}$  and  $\tau:=\max_{1 \leq i,j\leq N}\tau_{ji} < \infty$.

\begin{theorem}\label{thm_main}Let $\{(x_i, v_i)\}_{i=1}^N$ be a global solution to the system \eqref{main_eq}-\eqref{eq_ini}. Suppose that  there exist     some constants $\tau_0 > 0$ and  $\alpha > 0$    such that
\begin{equation}\label{cond}
\frac{d_V(0)}{  \alpha\psi(d_X(0) + R_v \tau_0 + \alpha)} <1 .
\end{equation}
Then there exists  {$\bar\tau \in (0, \tau_0) $}   such that for all $\tau \in (0,\bar\tau]$ we have
\[
\sup_{-\tau \leq  < +\infty}d_X(t) < +\infty \quad \mbox{and} \quad d_V(t) \leq C_0 e^{-c_1 \psi(d_X(0) + R_v\tau_0 + \alpha) t}, \quad \forall\,t \geq 0,
\]
where $C_0$ and $c_1$ are some positive constants.
\end{theorem}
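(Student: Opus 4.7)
The plan is to run a Cucker-Smale-style bootstrap on the diameters $d_X$ and $d_V$, tracking the extra error created by the delayed evaluations, and to close it by a continuity argument under the smallness condition on $\tau$. I would first check that individual velocities remain uniformly bounded: since the right-hand side of \eqref{main_eq} is a convex-combination-like sum with coefficients $\psi(\cdot) \in (0,1]$, an elementary Gronwall estimate yields $|v_i(t)| \leq C$ on any finite interval, with $C$ depending only on the initial-data bound $R_v$ (the uniform bound on the $|v_i^0(s)|$ over $[-\tau,0]$). Next, by continuity, set
\[
T^{*} := \sup\bigl\{\, T > 0 : d_X(t) \leq d_X(0) + R_v \tau_0 + \alpha \ \ \text{for all } t \in [-\tau, T]\,\bigr\},
\]
which is strictly positive by the initial assumptions; the aim is to show $T^{*} = +\infty$.

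On $[0, T^{*}]$, each delayed argument satisfies $|x_j(t - \tau_{ji}) - x_i(t)| \leq d_X(t) + R_v \tau \leq d_X(0) + R_v \tau_0 + \alpha$, so by monotonicity of $\psi$ every interaction weight is bounded below by $\psi_m := \psi(d_X(0) + R_v \tau_0 + \alpha)$. Differentiating $d_V^2/2$ at maximizing indices $(i^{*}, j^{*})$ via an envelope argument and decomposing
\[
v_k(t - \tau_{k\cdot}) - v_{\cdot}(t) = \bigl(v_k(t) - v_{\cdot}(t)\bigr) + \bigl(v_k(t - \tau_{k\cdot}) - v_k(t)\bigr),
\]
the first summand yields the classical Cucker-Smale dissipation bounded above by $-\psi_m d_V(t)$, while the second summand is an error of order $\tau \sup_{s \in [t-\tau, t]} \max_k |\dot v_k(s)|$. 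Since $|\dot v_k(s)|$ is itself controlled by $d_V^{\tau}(s) := \max_{i,j} |v_j(s - \tau_{ji}) - v_i(s)|$ and $d_V^{\tau}(s) \leq d_V(s) + \tau \sup_{[s - \tau, s]} |\dot v|$, absorbing the $O(\tau)$ feedback produces a clean Halanay-type delay-differential inequality
\[
\dot d_V(t) \leq -\psi_m\, d_V(t) + K \tau \sup_{s \in [t - 2\tau,\, t]} d_V(s),
\]
together with the elementary bound $\dot d_X(t) \leq d_V(t)$, for some constant $K$ independent of $\tau$.

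Choosing $\bar\tau$ so small that $K \bar\tau \leq \psi_m / 2$, a standard Halanay lemma then delivers the exponential decay $d_V(t) \leq C_0 e^{-c_1 \psi_m t}$ for some $c_1 \in (0,1)$ and $C_0$ depending on the initial data. Integrating $\dot d_X \leq d_V$ gives $d_X(t) \leq d_X(0) + C_0 / (c_1 \psi_m)$, which by hypothesis \eqref{cond} can be made strictly less than $d_X(0) + \alpha$ after shrinking $\bar\tau$ further; this contradicts $T^{*} < +\infty$ and therefore forces $T^{*} = +\infty$, yielding both conclusions of the theorem. The main obstacle will be producing the Halanay-type inequality cleanly: one must split the delayed differences so that the leading dissipative term is driven by $d_V(t)$ itself and so that the $\tau$-perturbation is reabsorbable into the right-hand side without creating a runaway feedback, and one must track carefully how $C_0$, $K$, and $R_v$ depend on the initial profile on $[-\tau, 0]$ so that the smallness requirements on $\bar\tau$ can be imposed uniformly.
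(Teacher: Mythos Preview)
Your overall architecture---velocity bound, dissipative inequality for $d_V$ via a maximizing-pair computation, splitting $v_k(t-\tau_{ki})-v_i(t)$ into an instantaneous piece and a delay error, and closing by a continuity argument---is exactly the paper's. The point that needs repair is the passage from the delay error to the single Halanay inequality
\[
\dot d_V(t)\le -\psi_m\,d_V(t)+K\tau\sup_{s\in[t-2\tau,\,t]}d_V(s).
\]
The chain you use is $|\dot v_k(s)|\le C\,d_V^\tau(s)$ and $d_V^\tau(s)\le d_V(s)+\tau\sup_{[s-\tau,s]}|\dot v|$, but substituting the first into the second and iterating does \emph{not} close on a window of width $2\tau$: each substitution pushes the window back by another $\tau$, so after $k$ steps you carry $\sup_{[t-k\tau,t]}d_V$ with weight $(C\tau)^{k-1}$. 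You end up either with an infinite-delay Halanay inequality (which can be handled, but is not the one you wrote), or---if you truncate using the crude bound $|\dot v|\le 2R_v$---with a residual constant term $O(\tau^2)$ that blocks exponential decay.

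The paper sidesteps this by \emph{not} eliminating the delay error. It names it
\[
\Delta_N^\tau(t):=\frac1N\max_i\sum_{k\ne i}|v_k(t-\tau_{ki})-v_k(t)|,
\]
and derives the coupled system
\[
\dot d_V(t)\le -\psi(d_X(t)+R_v\tau)\,d_V(t)+2\Delta_N^\tau(t),\qquad
\Delta_N^\tau(t)\le C_{N,1}\!\int_{t-\tau}^t d_V + \int_{t-\tau}^t \Delta_N^\tau.
\]
The continuity/bootstrap ansatz is then imposed on the pair $(d_V,\Delta_N^\tau)$ simultaneously: one postulates $d_V(s)<C_0 e^{-c\psi^\infty s}$ \emph{and} $\Delta_N^\tau(s)<\beta(\psi^\infty)^2 e^{-c\psi^\infty s}$, and shows each improves the other for $\tau$ small. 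This two-variable bootstrap is precisely the clean way to ``absorb the $O(\tau)$ feedback without runaway'' that you flagged as the obstacle; your proposal would go through once you replace the single Halanay inequality by this coupled formulation.
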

%\textcolor{blue}{Li:  In Theorem 1.1, the delay $\tau$  is involved in the  main assumption  \eqref{cond}, but the conclusion is made for small $\tau\in (0,\bar\tau]$. It makes me  confused. } \textcolor{red}{YP: Yes, you are right, the statement of Theorem 1.1 is not clear. Please find the change coloured in red.}
%
%\textcolor{blue}{Li: I see. Then, maybe,  the statement can be:  Suppose that  there exist     some constants $\tau_0 > 0$,    $\alpha > 0$ and $0< c_1 < \frac{N-2}{N}$   such that
%\begin{equation*}
%\frac{d_V(0)}{\psi(d_X(0) + R_v \tau_0 + \alpha)} < c_1 \alpha.
%\end{equation*}
%Then there exist  {\em $\bar\tau \in (0, \tau_0) $}   such that for all $\tau \in (0,\bar\tau]$ we have
%\[
%\sup_{-\tau \leq  < +\infty}d_X(t) < +\infty \quad \mbox{and} \quad d_V(t) \leq C_0 e^{-c_1 \psi(d_X(0) + R_v\tau_0 + \alpha) t}, \quad \forall\,t \geq 0,
%\]
%for some positive constant $C_0 > 0$.}
%\textcolor{red}{YP: Yes, you can revise the statement.}
\begin{remark}The communication weight $\psi$ introduced in the seminal paper by Cucker and Smale \cite{CS07} is of the form
\[
\psi(r) = \frac{1}{(1 + r^2)^{\beta/2}} \quad \mbox{with} \quad \beta > 0.
\]
Thus for the long-range communication weight, i.e., $\beta <1$, we can always find   positive constants  $\tau_0$ and $\alpha$  satisfying the assumption \eqref{cond}.
\end{remark}

In the next section of this note, we will present the details for the proof of main result.

\section{Emergent behavior: Proof of Theorem \ref{thm_main}}

\subsection{Global existence and uniqueness of solutions to the system \eqref{main_eq}} In this part, we first prove the global-in-time existence and uniqueness of solutions for the system \eqref{main_eq} so that all computations for the emergent behavior are justified. We notice from the above assumption on $\psi$ that the right-hand-side of \eqref{main_eq} is locally Lipschitz continuous as a function of $(x_i(t), v_i(t))$. Thus, by the Cauchy-Lipschitz theorem, the particle system \eqref{main_eq} admits a unique local-in-time $\mc^1$-solution. On the other hand, that local-in-time solution can be a global-in-time solution once we can show the uniform-in-time boundedness of the velocity since $\psi$ is bounded and Lipschitz.

In the lemma below, we show the uniform-in-time boundedness of the velocity which guarantees the global-in-time existence of the unique solution to the system \eqref{main_eq}. We are also going to use this  estimate for the large-time behavior.

\begin{lemma}\label{lem_bdd}Let $\{(x_i, v_i)\}_{i=1}^N$ be a solution to the system \eqref{main_eq}-\eqref{eq_ini}. Suppose that the initial velocity $v_{i0}, i=1,2,\cdots,N$ are continuous on the compact time interval $[-\tau,0]$  and denote $$R_v^\tau:=\max_{s \in [-\tau,0]}\max_{1 \leq i \leq N}|v_{i}(s)|  > 0.$$ Then we have
\[
R_v(t):= \max_{1 \leq i \leq N}|v_i(t)| \leq R_v^\tau \quad \mbox{for} \quad t \geq -\tau.
\]
\end{lemma}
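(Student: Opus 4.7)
The plan is to run a maximum-principle style contradiction argument: at the first instant $t^*$ when the initial velocity supremum $R_v^\tau$ would be exceeded, the delayed right-hand side of \eqref{main_eq} must push $|v_{i_0}|^2$ strictly downward, which is incompatible with $|v_{i_0}|$ having just reached a new maximum there. For $t \in [-\tau, 0]$ the claimed bound is immediate from the definition of $R_v^\tau$, so it suffices to treat $t \geq 0$.

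Concretely, I would fix an arbitrary $\epsilon > 0$, set $R_\epsilon := R_v^\tau + \epsilon$, and introduce
\[
t^* := \inf\bigl\{t \geq 0 : \max_{1\le i \le N}|v_i(t)| \geq R_\epsilon\bigr\},
\]
with the convention $\inf\emptyset = +\infty$. If $t^* = +\infty$ there is nothing to prove, so assume $t^* < \infty$. Since $\max_i|v_i(0)| \leq R_v^\tau < R_\epsilon$ and each $v_i$ is continuous, one gets $t^* > 0$, $\max_i|v_i(t^*)| = R_\epsilon$, and
\[
|v_j(s)| < R_\epsilon \qquad \text{for every } j \text{ and every } s \in [-\tau, t^*).
\]

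I would then pick $i_0$ with $|v_{i_0}(t^*)| = R_\epsilon$ and differentiate $|v_{i_0}|^2$ along \eqref{main_eq}:
\[
\frac{1}{2}\frac{d}{dt}|v_{i_0}(t)|^2 \bigg|_{t=t^*} = \frac{1}{N}\sum_{j\neq i_0} \psi_{ji_0}\bigl(v_j(t^* - \tau_{ji_0}) \cdot v_{i_0}(t^*) - |v_{i_0}(t^*)|^2\bigr),
\]
where $\psi_{ji_0} := \psi(|x_j(t^* - \tau_{ji_0}) - x_{i_0}(t^*)|) > 0$. The strict positivity $\tau_{ji_0} > 0$ of every delay places $t^* - \tau_{ji_0}$ strictly inside $[-\tau, t^*)$, so the previous display gives $|v_j(t^* - \tau_{ji_0})| < R_\epsilon = |v_{i_0}(t^*)|$. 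By Cauchy--Schwarz each bracket is then strictly negative, so the above derivative is strictly negative at $t^*$. Hence $|v_{i_0}(t)|^2 > R_\epsilon^2$ for $t$ slightly less than $t^*$, contradicting the definition of $t^*$. Sending $\epsilon \downarrow 0$ then yields $R_v(t) \leq R_v^\tau$ for all $t \geq 0$.

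The only delicate point will be the strict inequalities at $t^*$: I need the strict positivity of every delay $\tau_{ji_0} > 0$ to place all lookback times strictly inside $[-\tau, t^*)$ where the bound $|v_j(\cdot)| < R_\epsilon$ has already been secured, together with the strict positivity of $\psi$ to ensure that the non-positive contributions in the sum cannot all vanish. Note that this approach sidesteps any need to differentiate the non-smooth function $\max_i |v_i(t)|$ directly, since the required contradiction is extracted from a single smooth component $|v_{i_0}|^2$ at $t^*$.
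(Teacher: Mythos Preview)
Your proof is correct and uses the same $\epsilon$-inflation / first-crossing-time scheme as the paper: both set $R_\epsilon = R_v^\tau + \epsilon$, define the first time the bound is violated, compute $\tfrac{d}{dt}|v_i|^2$ via \eqref{main_eq}, and bound the inner products by Cauchy--Schwarz using that all delayed velocities sit in the already-controlled region.

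The only difference is in how the contradiction is closed. The paper derives the differential inequality $\tfrac{d}{dt}|v_i(t)| \le R_v^{\tau,\epsilon} - |v_i(t)|$ on the whole interval $(0,T_*^\epsilon)$ and integrates it via Gronwall to get $\lim_{t\to T_*^\epsilon-}R_v(t) < R_v^{\tau,\epsilon}$. You instead work pointwise at $t^*$: because every delay is \emph{strictly} positive, each lookback time $t^*-\tau_{ji_0}$ lands strictly inside $[-\tau,t^*)$, giving $|v_j(t^*-\tau_{ji_0})| < R_\epsilon$ strictly, hence $\tfrac{d}{dt}|v_{i_0}(t^*)|^2 < 0$, which is already incompatible with $t^*$ being the first hitting time. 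Your route is a touch more elementary (no Gronwall needed) but leans on the standing hypothesis $\tau_{ji}>0$; the paper's Gronwall step would survive even if some delays vanished.
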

\begin{proof} Although the proof is very similar to \cite[Lemma 2.2]{CH17}, we provide the details here for the completeness of this paper. For any $\e>0$, we set $R_v^{\tau,\e} := R_v^\tau + \e$ and $\mathcal{S}^\e := \{ t > 0 : R_v(t) < R^{\tau,\e}_v \mbox{ for } s \in [0,t)\}$. By the continuity of $R_v(t)$ together with $R_v^\tau < R_v^{\tau,\e}$, we get $\mathcal{S}^\e \neq 0$, and $T^\e_*:= \sup \mathcal{S}^\e > 0$ exists. We now claim $T^\e_* = \infty$. If not, it holds
\bq\label{con}
\lim_{t \to T^\e_* -} R_v(t) = R^\e_v, \quad \mbox{and} \quad R_v(t) < R^\e_v \quad \mbox{for} \quad t < T^\e_*.
\eq
On the other hand, it follows from \eqref{main_eq} that for $t < T^\e_*$,
$$\begin{aligned}
\frac12\frac{d |v_i(t)|^2}{dt} &= \frac1N\sum_{j\neq i} \psi(|x_j(t - \tau_{ji}) - x_i(t)|)(\lal v_j(t - \tau_{ji}), v_i(t)\ral - |v_i(t)|^2)\cr
&\leq  \frac1N\sum_{j\neq i} \psi(|x_j(t - \tau_{ji}) - x_i(t)|) {\lt(|v_j(t - \tau_{ji})| - |v_i(t)|\rt)|v_i(t)|} %\quad \text{\textcolor{blue}{\em how to get it}}
\cr
&\leq \frac1N\sum_{j\neq i} \psi(|x_j(t - \tau_{ji}) - x_i(t)|)\lt(R_v^\e - |v_i(t)|\rt)|v_i(t)|.
\end{aligned}$$
%\textcolor{red}{YP: Here I just used that $\psi \geq 0$ and $|\lal v_j(t - \tau_{ji}), v_i(t)\ral| \leq |v_j(t-\tau_{ji})||v_i(t)|$.}
 This further yields
$$\begin{aligned}
\frac{d|v_i(t)|}{dt} \leq \frac1N\sum_{j\neq i} \psi(|x_j(t - \tau_{ji}) - x_i(t)|)\lt(R^\e_v - |v_i(t)|\rt) \leq R^\e_v - |v_i(t)|, \quad \mbox{a.e. on} \quad (0,T^\e_*),
\end{aligned}$$
due to $R^\e_v \geq R_v(t)$, i.e., $R^\e_v \geq |v_i(t)|$ for all $1\leq i \leq N$ and $t < T^\e_*$, and $0 \leq \psi \leq 1$. Applying Gronwall's inequality to the above, we have
\[
\lim_{t \to T^\e_*-} R_v(t) \leq \lt(R_v(0) - R^{\tau,\e}_v \rt)e^{-T^\e_*} + R^{\tau,\e}_v < R^{\tau,\e}_v,
\]
since $R_v(0) < R^{\tau,\e}_v$. This contradicts \eqref{con}, and thus $T^\e_* = \infty$. We finally pass to the limit $\e \to 0$ to conclude our desired result.
\end{proof}

\subsection{Construction of a Lyapunov functional} In this part, we construct the system of dissipative differential inequalities. For this, we introduce position and velocity diameters as 
\[
d_X(t) :=  \max_{1 \leq i,j \leq N}    |x_i(t) - x_j(t)| \quad \mbox{and} \quad d_V(t) := \max_{1 \leq i,j \leq N}|v_i(t) - v_j(t)|.
\]
%\[
%d_X(t) := \max_{-\tau \leq s \leq t} \max_{1 \leq i,j \leq N}|x_i(t) - x_j(t)|, \quad  d_V(t) := \max_{-\tau \leq s \leq t} \max_{1 \leq i,j \leq N}|v_i(t) - v_j(t)|.
%\]
%\textcolor{blue}{Li: They may be $$d_X(t) :=  \max_{1 \leq i,j \leq N}\max_{t-\tau_{ij} \leq s \leq t}|x_i(s) - x_j(s)|, \quad  d_V(t) := \max_{1 \leq i,j \leq N}\max_{t-\tau_{ij} \leq s \leq t}|v_i(s) - v_j(s)|.$$}
%
%%\noindent \textcolor{red}{YP: Yes!}
%
%\noindent\textcolor{blue}{Li: Sorry, I think they should be simply  $$d_X(t) :=  \max_{1 \leq i,j \leq N}    |x_i(t) - x_j(t)|, \quad  d_V(t) := \max_{1 \leq i,j \leq N}|v_i(t) - v_j(t)|.$$}

%\noindent \textcolor{red}{YP: OK!}

\begin{lemma}\label{lem_sddi} Let $\{(x_i, v_i)\}_{i=1}^N$ be a global solution to the system \eqref{main_eq}-\eqref{eq_ini}. Then the diameters functions $d_X(t)$ and $d_V(t)$ satisfy
\begin{align}\label{eqn_dxdv}
\begin{aligned}
\frac{d}{dt} d_X(t) &\leq d_V(t), \cr
\frac{d}{dt} d_V(t) &\leq -  \psi(d_X(t) + R_v\tau) d_V(t) + 2\Delta_{N}^\tau(t),
\end{aligned}
\end{align}
for almost all $t > 0$, where $\Delta_{N}^\tau(t)$ is given by
\[
\Delta_{N}^\tau(t):=\frac1N\max_{1 \leq i \leq N}\sum_{k\neq i }|v_k(t - \tau_{ki}) - v_k(t)|
\]
and satisfies
\bq\label{eq_delta}
\Delta_N^\tau(t) \leq C_{N,1}\int_{t - \tau}^t d_V(s)\,ds + \int_{t-\tau}^t \Delta_N^\tau(s)\,ds,
\eq
for $t \geq \tau$, where $C_{N,1}:=(N-1)/N$.
\end{lemma}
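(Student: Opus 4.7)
My approach is to establish the three inequalities separately, exploiting the fact that $d_X$ and $d_V$ are Lipschitz maxima and hence differentiable almost everywhere. For the $d_X$ inequality, at any differentiability point I would pick indices $(i,j)$ attaining $d_X(t)=|x_i(t)-x_j(t)|$ and compute, using the equations of motion and Cauchy--Schwarz,
\[
\frac{d}{dt}d_X(t)=\left\langle\frac{x_i-x_j}{|x_i-x_j|},\,v_i-v_j\right\rangle\leq|v_i-v_j|\leq d_V(t).
\]

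For the $d_V$ inequality, I would pick indices $(I,J)$ attaining $d_V(t)^2=|v_I-v_J|^2$, differentiate, and use the velocity splitting
\[
v_k(t-\tau_{kI})-v_I(t)=\bigl(v_k(t)-v_I(t)\bigr)+\bigl(v_k(t-\tau_{kI})-v_k(t)\bigr)
\]
(and the analogue with $I$ replaced by $J$), which decomposes $\dot v_I-\dot v_J$ into an instantaneous Cucker--Smale contribution and a delay correction. For the instantaneous contribution, the extremality of $(I,J)$ yields $\langle v_I-v_J,v_k-v_I\rangle\leq 0$ and $\langle v_I-v_J,v_k-v_J\rangle\geq 0$ for every $k$ (obtained by expanding $|v_k-v_J|^2\leq d_V^2$ and $|v_I-v_k|^2\leq d_V^2$). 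Combined with the weight lower bound
\[
\psi(|x_k(t-\tau_{kI})-x_I(t)|)\geq\psi(d_X(t)+R_v\tau),
\]
which follows from the triangle inequality, Lemma \ref{lem_bdd}, and monotonicity of $\psi$, together with the algebraic identity $\tfrac1N\sum_{k\neq I}(v_k-v_I)=\bar v-v_I$ (with $\bar v$ the mean velocity), the instantaneous contribution to $\langle v_I-v_J,\dot v_I-\dot v_J\rangle$ collapses to $-\psi(d_X(t)+R_v\tau)|v_I-v_J|^2$. The delay correction is estimated by Cauchy--Schwarz and $\psi\leq 1$, producing $2|v_I-v_J|\Delta_N^\tau(t)$. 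Dividing the resulting inequality for $\tfrac12 d_V^2$ by $d_V$ then yields the stated bound.

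For the estimate on $\Delta_N^\tau$, I would write $v_k(t)-v_k(t-\tau_{ki})=\int_{t-\tau_{ki}}^t\dot v_k(s)\,ds$, substitute the equation of motion, and apply the same velocity-splitting trick to obtain
\[
|\dot v_k(s)|\leq\frac{N-1}{N}d_V(s)+\Delta_N^\tau(s),
\]
using only $\psi\leq 1$. Integrating over $[t-\tau_{ki},t]\subseteq[t-\tau,t]$, summing over $k\neq i$, dividing by $N$, and maximizing over $i$ then produces \eqref{eq_delta} with $C_{N,1}=(N-1)/N$, after the mild estimates $\tfrac{(N-1)^2}{N^2}\leq\tfrac{N-1}{N}$ and $\tfrac{N-1}{N}\leq 1$.

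The main obstacle I anticipate is the dissipation step for $d_V$: because the delayed arguments $|x_k(t-\tau_{kI})-x_I(t)|$ and $|x_k(t-\tau_{kJ})-x_J(t)|$ differ from each other and from the ``undelayed'' distances, the classical Ha--Liu cancellation \cite{HL} does not apply verbatim. The remedy is to bound every weight from below by the single quantity $\psi(d_X(t)+R_v\tau)$, whose monotonicity converts the two extremal inner-product inequalities into the desired dissipation; the delay correction $\Delta_N^\tau$ then naturally absorbs the error introduced by the time lag and is easy to control by Cauchy--Schwarz.
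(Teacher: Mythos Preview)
Your proposal is correct and follows essentially the same route as the paper: the same velocity splitting $v_k(t-\tau_{kI})-v_I(t)=(v_k(t)-v_I(t))+(v_k(t-\tau_{kI})-v_k(t))$, the same use of the extremal sign conditions $\langle v_I-v_J,v_k-v_I\rangle\le 0$ to replace each weight by the uniform lower bound $\psi(d_X(t)+R_v\tau)$, the same algebraic collapse of the instantaneous part to $-\psi(d_X(t)+R_v\tau)d_V(t)^2$, and the same fundamental-theorem-plus-equation-of-motion argument for the $\Delta_N^\tau$ bound. The only cosmetic difference is that the paper bounds $\Delta_N^\tau(t)$ by $\tfrac1N\sum_{k=1}^N\int_{t-\tau}^t|\dot v_k|\,ds$ (summing over all $k$) before inserting the pointwise bound on $|\dot v_k|$, whereas you keep the sum over $k\neq i$ and then use $(N-1)/N\le 1$; both lead to the same inequality \eqref{eq_delta}.
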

\begin{proof}We first easily find from \eqref{main_eq} that
\[
\frac{d}{dt} d_X(t) \leq d_V(t).
\]
Next we derive the differential inequality for $d_V(t)$. Note that there exist at most countable number of increasing times $t_k$ such that we can choose indices $i$ and $j$ such that $d_V(t) = |v_i(t) - v_j(t)|$ on any time interval $(t_k, t_{k+1})$ since the number of particles is finite and continuity of the velocity trajectories. This allows us to estimate the time evolution of $d_V(t)$ as
$$\begin{aligned}
\frac12\frac{d}{dt}d_V(t)^2 &= \frac12\frac{d}{dt}|v_i(t) - v_j(t)|^2 \cr
&= \lt\lal v_i(t) - v_j(t), \frac{d v_i(t)}{dt} - \frac{dv_j(t)}{dt}\rt\ral\cr
&= \frac1N\lt\lal v_i(t) - v_j(t),  \sum_{k\neq i } \psi(|x_k(t - \tau_{ki}) - x_i(t)|)(v_k(t - \tau_{ki}) - v_i(t))\rt\ral\cr
&\quad - \frac1N\lt\lal v_i(t) - v_j(t),  \sum_{k\neq j } \psi(|x_k(t - \tau_{kj}) - x_j(t)|)(v_k(t - \tau_{kj}) - v_j(t))\rt\ral \cr
&=: I_1 + I_2.
\end{aligned}$$
Before estimating the terms $I_i,i=1,2$, we notice that
\[
\lt|x_k(t - \tau_{ki}) - x_i(t)\rt| = \lt| x_k(t) - x_i(t) + \int_t^{t - \tau_{ki}} v_k(s)\,ds \rt| \leq d_X(t) + R_v\tau.
\]
Using the above inequality, we estimate $I_1$ as
$$\begin{aligned}
I_1 &= \frac1N\sum_{k\neq i } \psi(|x_k(t - \tau_{ki}) - x_i(t)|)\lt\lal v_i(t) - v_j(t),v_k(t) - v_i(t)\rt\ral\cr
&\quad + \frac1N\sum_{k\neq i } \psi(|x_k(t - \tau_{ki}) - x_i(t)|)\lt\lal v_i(t) - v_j(t),v_k(t - \tau_{ki}) - v_k(t)\rt\ral \cr
&\leq \frac{\psi(d_X(t) + R_v\tau)}{N} \sum_{{k\neq i} } \lt\lal v_i(t) - v_j(t),v_k(t) - v_i(t)\rt\ral + \frac{d_V(t)}{N}\sum_{{k\neq i }}|v_k(t - \tau_{ki}) - v_k(t)|,
\end{aligned}$$
where we used $\psi \leq 1$ and   $\lt\lal v_i(t) - v_j(t),v_k(t) - v_i(t)\rt\ral \leq 0$  for   $(i,j)$ with $d_V = |v_i - v_j|$.  Similarly, we can obtain
$$\begin{aligned}
I_2 &\leq -\frac{\psi(d_X(t) + R_v\tau)}{N} \sum_{{k\neq  j} } \lt\lal v_i(t) - v_j(t),v_k(t) - v_j(t)\rt\ral + \frac{d_V(t)}{N}\sum_{{k\neq  j} }|v_k(t - \tau_{kj}) - v_k(t)|.
\end{aligned}$$
This yields
\[
\frac12\frac{d}{dt} d_V(t)^2 \leq  {-\psi(d_X(t) + R_v\tau)   d_V(t)^2} + \frac{2d_V(t)}{N}\max_{1 \leq i \leq N}\sum_{k\neq i }|v_k(t - \tau_{ki}) - v_k(t)|,
\]
for almost all $t \geq 0$. Thus we have
\[
\frac{d}{dt} d_V(t) \leq - \psi(d_X(t) + R_v\tau) d_V(t) + 2\Delta_{N}^\tau(t),
\]
for almost all $t \geq 0$. We next estimate the term $\Delta_{N}^\tau(t)$. Note that
\[
|v_k(t -\tau_{ki}) - v_k(t)| = \lt| \int_{t - \tau_{ki}}^t \frac{d v_k(s)}{ds}\,ds \rt| \leq \int_{t - \tau}^t \lt|\frac{d v_k(s)}{ds}\rt|\,ds.
\]
This gives
\bq\label{eqn_del}
\Delta_N^\tau(t) \leq \frac1N\sum_{k=1}^N\int_{t - \tau}^t \lt|\frac{d v_k(s)}{ds}\rt|\,ds   \quad \mbox{for} \quad t \geq 0.
\eq
On the other hand, it follows from \eqref{main_eq} that
\begin{align}\label{eqn_del2}
\begin{aligned}
\lt| \frac{dv_k(s)}{ds}\rt| &= \lt|\frac1N\sum_{\ell \neq k} \psi(|x_\ell(s - \tau_{\ell k}) - x_k(s)|)(v_\ell(s - \tau_{\ell k}) - v_k(s)) \rt|\cr
&\leq \frac{d_V(s)(N-1)}{N} + \frac1N\sum_{\ell \neq k}|v_\ell(s - \tau_{\ell k}) - v_\ell(s))|\cr
&\leq C_{N,1}d_V(s) + \Delta_N^\tau(s),  
\end{aligned}
\end{align}
for $t\geq 0$. Combining the above estimates \eqref{eqn_del} and \eqref{eqn_del2} concludes the desired result.
\end{proof}

\begin{remark}If there is no time delay, i.e., $\tau = 0$, then the differential inequality in Lemma \ref{lem_sddi} becomes the standard system of dissipative differential inequalities in \cite{HL}.
\end{remark}

\begin{remark}\label{rmk_del} It follows from Lemma \ref{lem_bdd} that
\[
\lt| \frac{dv_k(s)}{ds}\rt| = \lt|\frac1N\sum_{\ell \neq k} \psi(|x_\ell(s - \tau_{\ell k}) - x_k(s)|)(v_\ell(s - \tau_{\ell k}) - v_k(s)) \rt| \leq 2R_v^\tau.
\]
This gives the following estimate:
\[
\sup_{0 \leq t \leq \tau}\Delta_N^\tau(t) \leq \frac1N\sum_{k=1}^N\int_{t - \tau}^t \lt|\frac{d v_k(s)}{ds}\rt|\,ds \leq 2R_v^\tau \tau \to 0 \quad \mbox{as} \quad \tau \to 0.
\]
\end{remark}

%%%%%%%%%%%%%%%%%%%%%%%%%%%%%%%%%%%%%%%%%%%%%%%%%%%%%%%%%%%%%%%%%%%%%%%%%%%%%%%%%5
%
%
%    \section{Proof}
%
%
%%%%%%%%%%%%%%%%%%%%%%%%%%%%%%%%%%%%%%%%%%%%%%%%%%%%%%%%%%%%%%%%%%%%%%%%%%%%%%%%%
\subsection{Proof of Theorem \ref{thm_main}}
We are going to use the differential inequalities \eqref{eqn_dxdv}  together with a continuity argument  to complete the proof of Theorem \ref{thm_main}.

Set
\[
\mathcal{T} := \lt\{ t \in [0,\infty) : d_X(s) < d_X(0) + \alpha \quad \mbox{for} \quad s \in [0,t)\rt\}.
\]
It is clear from the continuity of the function $d_X(t)$ that $\mt \neq \emptyset$, thus we can set $\mathcal{T}^\infty := \sup \mathcal{T}$.

$\bullet$ {\bf Step A.-} (Time-decay estimate of $d_V(t)$ and $\Delta_N^\tau(t)$): According to \eqref{cond}, we first choose some  positive constants $\beta>0$ and $0<c<1$ such that
\[
\frac{d_V(0)}{\psi(d_X(0) + R_v \tau_0 + \alpha)} + \frac{2\beta}{  1 - c  } < \alpha.
\]
Then we set
$$\begin{aligned}
\mt_* := \bigg\{ t \in [0,\mt^\infty) : d_V(s) &< \lt(d_V(0) + \frac{2\beta\psi^\infty}{ 1 - c  }\rt)e^{-c\psi^\infty s} \quad \mbox{and} \cr
 \Delta^\tau_N(s) &< \beta(\psi^\infty)^2 e^{-c \psi^\infty s} \quad \mbox{for} \quad s \in [0,t)\bigg\}.
\end{aligned}$$
where we denoted   $\psi^\infty := \psi(d_X(0) + R_v \tau_0 + \alpha)$ for notational simplicity. Note that $\mt_* \neq \emptyset$ for $\tau$ small enough. Indeed, we find
\bq\label{as_ini}
d_V(0) < d_V(0) + \frac{2\beta\psi^\infty}{1 - c } \quad \mbox{and} \quad \sup_{0 \leq t \leq \tau}\Delta^\tau_N(t) < \beta(\psi^\infty)^2e^{-c \psi^\infty \tau}
\eq
for $\tau > 0$ small enough such that $2R_v^\tau \tau e^{c\psi^\infty \tau}<  \beta (\psi^\infty)^2$ due to Remark \ref{rmk_del}. Thus by continuity of functions $d_V(t)$ and $\Delta^\tau_N(t)$ there exists $\tau_1 > 0$ such that $0 < \mt^\infty_* : =\sup\mt_*$ for all $\tau \in (0,\tau_1)$. Then in the rest of this step we are going to show $\mt^\infty_* = \mt^\infty$. Suppose that $0<\mt^\infty_* < \mt^\infty$. Then we have either
\bq\label{cont_eq}
\lim_{t \to \mt^\infty_*\mbox{-}}d_V(t) = \lt(d_V(0) + \frac{2\beta\psi^\infty}{1- c }\rt)e^{-c \psi^\infty \mt^\infty_*} \quad \mbox{or} \quad  \lim_{t \to \mt^\infty_*\mbox{-}}\Delta^\tau_N(t) = \beta(\psi^\infty)^2 e^{-c \psi^\infty \mt^\infty_*}.
\eq
On the one hand, it follows from Lemma \ref{lem_sddi} that
\[
\frac{d}{dt}d_V(t) \leq - \psi^\infty d_V(t) + 2\beta(\psi^\infty)^2 e^{-c \psi^\infty t}, \quad \mbox{a.e. } t \in [0,\mt^\infty_*).
\]
Applying Gronwall's inequality yields
\[
d_V(t) \leq d_V(0) e^{-  \psi^\infty t} + \frac{2\beta \psi^\infty}{1 - c }\lt(e^{-c\psi^\infty t} - e^{- \psi^\infty t}\rt), \quad \quad t \in [0,\mt^\infty_*).
\]
Taking the limit $t \to \mt^\infty_*$- to the above inequality gives
$$\begin{aligned}
\lim_{t \to \mt^\infty_*\mbox{-}} d_V(t) &\leq d_V(0) e^{-  \psi^\infty \mt^\infty_*} + \frac{2\beta \psi^\infty}{1 - c }\lt(e^{-c \psi^\infty \mt^\infty_*} - e^{- \psi^\infty \mt^\infty_*}\rt)\cr
&< \lt(d_V(0) + \frac{2\beta\psi^\infty}{1 - c }\rt)e^{-c \psi^\infty \mt^\infty_*}.
\end{aligned}$$
% Thus this cannot be occur.
On the other hand, we find from \eqref{eq_delta} together with \eqref{as_ini} that
$$\begin{aligned}
\Delta^\tau_N(t) &\leq \lt(C_{N,1}\lt(d_V(0) + \frac{2\beta\psi^\infty}{1 - c }\rt) + \beta(\psi^\infty)^2\rt)\int_{t - \tau}^t e^{-c \psi^\infty s}\,ds\cr
&= \lt(C_{N,1}\lt(d_V(0) + \frac{2\beta\psi^\infty}{1- c }\rt) + \beta(\psi^\infty)^2\rt)\lt(\frac{e^{c \psi^\infty \tau} - 1}{c \psi^\infty}\rt)e^{-c \psi^\infty t}
\end{aligned}$$
for all $t \in [0,\mt^*_*)$. We then now choose $0 < \tau_2 < \tau_1$ such that
\[
\lt(C_{N,1}\lt(d_V(0) + \frac{2\beta\psi^\infty}{1 - c }\rt) + \beta(\psi^\infty)^2\rt)\lt(\frac{e^{c \psi^\infty \tau} - 1}{c \psi^\infty}\rt) < \beta(\psi^\infty)^2
\]
for $\tau \in (0,\tau_2)$. This together with taking the limit $t \to \mt^\infty_*$- yields
\[
\lim_{t \to \mt^\infty_*\mbox{-}}\Delta^\tau_N(t) < \beta(\psi^\infty)^2 e^{-c \psi^\infty \mt^\infty_*}.
\]
Hence both equalities \eqref{cont_eq} do not hold, and this concludes $\mt^\infty_* = \mt^\infty$. \newline

 $\bullet$ {\bf Step B.-} (Uniform-in-time bound estimate of $d_X(t)$): We are now ready to show that $\mt^\infty = \infty$ when $\tau > 0$ is small enough. Note that for $t \in [0,\mt^\infty)$ and $\tau \in (0,\tau_2)$ it holds
$$\begin{aligned}
d_X(t) &< d_X(0)+\alpha,\cr
d_V(t) &< \lt(d_V(0) + \frac{2\beta\psi^\infty}{1 - c }\rt)e^{-c \psi^\infty t},\cr
 \Delta^\tau_N(t) &< \beta(\psi^\infty)^2 e^{-c \psi^\infty t}.
\end{aligned}$$
Suppose not, i.e., $\mathcal{T}^\infty < \infty$, then we get
\[
\lim_{t \to \mt^\infty\mbox{-}}d_X(t) = d_X(0)+\alpha.
\]
On the other hand, it follows from Lemma \ref{lem_sddi} together with the above estimate that
$$\begin{aligned}
d_X(t) &\leq d_X(0) + \int_0^t d_V(s)\,ds\cr
&\leq d_X(0) +  \lt(d_V(0) + \frac{2\beta\psi^\infty}{1 - c }\rt)\int_0^t e^{-c \psi^\infty s}\,ds\cr
&= d_X(0) +  \lt(d_V(0) + \frac{2\beta\psi^\infty}{1 - c }\rt)\frac{1}{c \psi^\infty}\lt(1 - e^{c \psi^\infty t} \rt)
\end{aligned}$$
for $t \in [0,\mt^\infty)$. This gives
\[
\lim_{t \to \mt^\infty\mbox{-}}d_X(t) \leq d_X(0) +  \lt(d_V(0) + \frac{2\beta\psi^\infty}{1 - c }\rt)\frac{1}{c \psi^\infty}\lt(1 - e^{c \psi^\infty \mt^\infty} \rt) < d_X(0) + \alpha.
\]
This is a contradiction and yields $\mt^\infty = \infty$. \newline

 $\bullet$ {\bf Step C.-} (Exponential decay estimate of $d_V(t)$): By the discussion in {\bf Step A} and {\bf Step B}, we find $\mt^\infty_* = \mt^\infty = \infty$, that is, the following inequalities hold for $t\geq 0$:
\[
d_X(t) \leq d_X(0) + \alpha \quad \mbox{and} \quad d_V(t) \leq \lt(d_V(0) + \frac{2\beta\psi^\infty}{ 1 - c  }\rt)e^{-c\psi^\infty t},
\]
where $\psi^\infty, \beta$, and $c$ are appeared in {\bf Step A}. This completes the proof.

%%%%%%%%%%%%%%%%%%%%%%%%%%%%%%%%%%%%%%%%%%%%%%%%%%%%%%%%%%%%%%%%%%%%%%%%%%%%%%%%%5
%
%
%                        thebibliography
%
%
%%%%%%%%%%%%%%%%%%%%%%%%%%%%%%%%%%%%%%%%%%%%%%%%%%%%%%%%%%%%%%%%%%%%%%%%%%%%%%%%%

\end{document}